\newcommand{\EME}{Erd\H{o}s-Moser equation}
\DeclareMathOperator{\LCM}{LCM}
\newcommand{\emdash}{\hspace{2pt}\textemdash\hspace{2pt}}
\newcommand{\osn}[1]{\oldstylenums{#1}}
\theoremstyle{plain}
\newtheorem{thm}{Theorem}
\newtheorem{cor}{Corollary}
\newtheorem{lem}{Lemma}
\newtheorem{prop}{Proposition}
\newtheorem{cnj}{Conjecture}
\theoremstyle{definition}
\newtheorem{dfn}{Definition}
\newtheorem{remark}{Remark}
\newtheorem{example}{Example}
\numberwithin{equation}{section}
\title[Reducing The Equation $1^n + 2^n + \dotsb + k^n = (k+1)^n$ Modulo $k$ and $k^2$]{Reducing the Erd\H{o}s-Moser Equation\\ $\boldsymbol{1^n + 2^n + \dotsb + k^n = (k+1)^n}$\\ Modulo $\boldsymbol{k}$ and $\boldsymbol{k^2}$}
\author{Jonathan Sondow}
\address{209 West 97th Street\\ New York\\ NY 10025\\ USA}
\email{jsondow@alumni.princeton.edu}
\author{Kieren MacMillan}
\address{49 Lessard Avenue\\ Toronto\\ Ontario\\ Canada\\ M6S 1X6}
\email{kieren@alumni.rice.edu}
\begin{document}
\baselineskip=1.33em
\begin{abstract}
An open conjecture of Erd\H{o}s and Moser is that the only solution of the Diophantine equation in the title is the trivial solution $1 + 2 = 3$. Reducing the equation modulo $k$ and $k^2$, we give necessary and sufficient conditions on solutions to the resulting congruence and supercongruence. A corollary is a new proof of Moser's result that the conjecture is true for odd exponents $n$. We also connect solutions $k$ of the congruence to primary pseudoperfect numbers and to a result of Zagier. The proofs use divisibility properties of power sums as well as Lerch's relation between Fermat and Wilson quotients.
\end{abstract}
\subjclass[2000]{Primary 11D61; Secondary 11D79, 11A41}
\keywords{Chinese Remainder Theorem, congruence, Diophantine equation, Egyptian fraction, \EME{}, Fermat quotient, perfect, power sum, primary pseudoperfect number, prime, pseudoperfect, square-free, supercongruence, Wilson quotient}
\maketitle

\section{Introduction}  \label{SEC: Intro}
Around \osn{1953}, Erd\H{o}s and Moser studied the Diophantine equation
\begin{equation}
    1^n + 2^n + \dotsb + k^n = (k+1)^n  \label{EQ: EME}
\end{equation}
and made the following conjecture.

\begin{cnj} \label{CNJ: EMC}
The only solution of \eqref{EQ: EME} in positive integers is the trivial solution \mbox{$1 + 2 = 3$}.
\end{cnj}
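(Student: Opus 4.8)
The plan is to argue by contradiction. Suppose \eqref{EQ: EME} has a nontrivial solution, i.e.\ positive integers $n,k$ with $k \geq 3$ satisfying $S_n(k) = (k+1)^n$, where $S_n(k) := 1^n + 2^n + \dotsb + k^n$. Reducing modulo $k$, the summand $k^n$ vanishes and $(k+1)^n \equiv 1 \pmod{k}$, so the equation forces $S_n(k) \equiv 1 \pmod{k}$. First I would use the standard divisibility properties of power sums to extract the full force of this congruence: since $\sum_{j=1}^{k} j^n \equiv -(k/p)\pmod{p}$ when $(p-1)\mid n$ and $\equiv 0 \pmod{p}$ otherwise, matching the required residue $1$ modulo each prime power dividing $k$ shows that $k$ must be square-free, that every prime $p \mid k$ satisfies $(p-1)\mid n$, and that $\sum_{p\mid k} k/p \equiv -1 \pmod{k}$. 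The size of this sum then pins it to $k-1$, giving the relation $\sum_{p\mid k} \tfrac1p + \tfrac1k = 1$; that is, $k$ must be a \emph{primary pseudoperfect number}, the connection promised in the abstract.

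This already settles the odd case. If $n$ is odd, then for each prime $p \mid k$ the divisibility $(p-1)\mid n$ forces $p-1$ to be odd, hence $p = 2$; combined with square-freeness this yields $k = 2$, and $1^n + 2^n = 3^n$ fails for every $n > 1$. So the conjecture holds for odd exponents (recovering Moser's theorem as a corollary), and it remains to rule out even $n$. For this I would sharpen the analysis by reducing \eqref{EQ: EME} modulo $k^2$. The key technical input is Lerch's relation between the Fermat quotient $q_p(a) = (a^{p-1}-1)/p$ and the Wilson quotient, which converts the supercongruence $S_n(k) \equiv 1 \pmod{k^2}$ into a system of linear conditions on the Fermat quotients attached to the primes dividing $k$.

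Next I would inject an analytic constraint to complement the arithmetic ones. Dividing \eqref{EQ: EME} by $k^n$ and substituting $j = k-i$ rewrites it as $\sum_{i=0}^{k-1}(1-i/k)^n = (1+1/k)^n$; for large $k$ the left side is well approximated by the geometric series $\sum_{i\geq 0} e^{-ni/k} = (1-e^{-n/k})^{-1}$ and the right side by $e^{n/k}$, so with $x = n/k$ any solution must satisfy $e^{x}(1-e^{-x}) = 1$, i.e.\ $e^x = 2$. Thus every nontrivial solution obeys $n = (\log 2)\,k + O(1)$. Combining this pinned ratio with the primary-pseudoperfect structure of $k$, the divisibilities $(p-1)\mid n$, and the Fermat-quotient supercongruences above, the goal is to show that no integer $k \geq 3$ can meet all of these requirements at once.

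The main obstacle is exactly this final synthesis. Taken one at a time the necessary conditions are all satisfiable — primary pseudoperfect numbers ($2, 6, 42, 1806, \dotsc$) exist, the ratio $n/k \approx \log 2$ is only a soft asymptotic, and no single Fermat-quotient congruence is contradictory — so the difficulty is to prove that their \emph{conjunction} is impossible for every $k \geq 3$ rather than merely forcing $k$ to be astronomically large. Converting these mutually reinforcing constraints into an outright contradiction, instead of an ever-larger lower bound on $k$, is the crux on which a complete proof of \EMC{} turns.
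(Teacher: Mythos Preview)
The statement you are attempting to prove is \emph{Conjecture~\ref{CNJ: EMC}}, which the paper presents as an open problem, not a theorem. The paper does not contain a proof of it; indeed, the introduction explicitly notes that ``it has not even been established that there are only finitely many solutions.'' What the paper does prove are partial results: Theorem~\ref{THM: EMCongT} on the congruence modulo $k$, Corollary~\ref{COR: EMC n odd} for odd exponents, and Theorem~\ref{THM: p^2} on the supercongruence modulo $k^2$. Your proposal correctly reproduces the odd-exponent argument and sketches the modulo-$k^2$ machinery, but you yourself concede in the final paragraph that the ``synthesis'' step is missing: the constraints you list do not combine into a contradiction, only into lower bounds. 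That is precisely the current state of the problem, and your write-up is an outline of a strategy, not a proof.

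There is also a concrete error earlier in the proposal. From $\sum_{p\mid k} k/p \equiv -1 \pmod{k}$ you assert that ``the size of this sum then pins it to $k-1$,'' concluding that $k$ must be a primary pseudoperfect number. This is unjustified: the congruence only says $\sum_{p\mid k} 1/p + 1/k$ is an \emph{integer}, not that it equals $1$. The paper is careful on exactly this point (see the remark following Corollary~\ref{COR: PPPs are solutions}): the equality~\eqref{EQ: ppp def} is a priori strictly stronger than the congruence~\eqref{EQ: frac mod 1}, and whether every solution of~\eqref{EQ: frac mod 1} is a primary pseudoperfect number is itself not known. Since $\sum_{p} 1/p$ diverges, there is no size obstruction preventing $\sum_{p\mid k} 1/p$ from exceeding $1$ for square-free $k$ with many prime factors, so your ``pinning'' step fails without further argument.
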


Moser~\cite{Moser} proved the statement when $n$ is odd or $k<10^{10^6}\!$. In \osn{1987} Schinzel showed that in any solution, $k$ is even~\cite[p.~800]{MoreeEtAl}. An extension of Schinzel's theorem to a generalization of equation \eqref{EQ: EME} was given in \osn{1996} by Moree \cite[Proposition~9]{Moree}. For a recent elementary proof of a special case, see MacMillan and Sondow \cite[Proof of Proposition~2]{MSgeme}.

Many other results on the \emph{\EME{}}~\eqref{EQ: EME} are known, but it has not even been established that there are only finitely many solutions. For surveys of work on this and related problems, see Butske, Jaje, and Mayernik~\cite{Butske}, Guy~\cite[D7]{Guy}, and Moree~\cite{MoreeOnMoser}.

In the present paper, we first approximate equation~\eqref{EQ: EME} by the congruence
\begin{equation}
    1^n+ 2^n+ \dotsb + k^n\equiv (k+1)^n \pmod{k}.  \label{EQ: EMECong}
\end{equation}
In Section~\ref{SEC: Congruence}, we give necessary and sufficient conditions on $n$ and $k$ (Theorem~\ref{THM: EMCongT}), and we show that if a solution $k$ factors into a product of $1,2,3$, or $4$ primes, then $k=2,6,42$, or $1806$, respectively (Proposition~\ref{PROP: r=1,2,3,4 gives k=2,6,42,1806}). In Section~\ref{SEC: PPP}, we relate solutions $k$ to primary pseudoperfect numbers and to a result of Zagier. In the final section, Theorem~\ref{THM: EMCongT} is extended to the supercongruence (Theorem~\ref{THM: p^2})
\begin{equation*}
    1^n+ 2^n+ \dotsb + k^n\equiv (k+1)^n \pmod{k^2}.  \label{EQ: EMESuperCong}
\end{equation*}

Our methods involve divisibility properties of power sums, as well as Lerch's formula relating Fermat and Wilson quotients.

As applications of Theorems~\ref{THM: EMCongT} and~\ref{THM: p^2}, we reprove Moser's result that Conjecture~\ref{CNJ: EMC} is true for odd exponents~$n$ (Corollary~\ref{COR: EMC n odd}), and for even $n$ we show that a solution $k$ to \eqref{EQ: EME} cannot be a primary pseudoperfect number with $8$ or fewer prime factors (Corollary~\ref{COR: p^2 applic}). In a paper in preparation, we will give other applications of our results to the \EME{}.

\section{The Congruence  $1^n + 2^n + \dotsb + k^n \equiv (k+1)^n\!\pmod{k}$} \label{SEC: Congruence}

We will need a classical lemma on power sums. (An empty sum will represent $0$, as usual.)

\begin{dfn}
For integers $n\ge0$ and $a\ge1$, let $\Sigma_n(a)$ denote the \emph{power sum}
\begin{equation*}
	\Sigma_n(a) := 1^n + 2^n + \dotsb + a^n.
\end{equation*}
Set $\Sigma_n(0):=0$.
\end{dfn}

\begin{lem}  \label{LEM: H&W Thm119}
If $n$ is a positive integer and $p$ is a prime, then
\begin{align*}
	\Sigma_n(p) \equiv
	\begin{cases}
		\, -1 \pmod{p}, \qquad (p-1) \mid n, \\
		\, \hspace{0.75em} 0 \pmod{p}, \qquad (p-1) \nmid n.
	\end{cases}
\end{align*}
\end{lem}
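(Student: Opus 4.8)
The plan is to work in the field $\mathbb{Z}/p\mathbb{Z}$ and exploit the cyclic structure of its multiplicative group. The key observation is that the nonzero residues $1,2,\dotsc,p-1$ modulo $p$ form a cyclic group of order $p-1$, generated by some primitive root $g$. Since $p^n\equiv 0\pmod p$ for $n\ge 1$, we have $\Sigma_n(p)\equiv 1^n+2^n+\dotsb+(p-1)^n\pmod p$, and as $j$ ranges over $1,\dotsc,p-1$, the residue $j^n$ ranges over the same multiset as $(g^i)^n = (g^n)^i$ for $i=0,1,\dotsc,p-2$. Hence
\begin{equation*}
    \Sigma_n(p) \equiv \sum_{i=0}^{p-2} (g^n)^i \pmod{p}.
\end{equation*}

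Now I would split into the two cases according to whether $(p-1)\mid n$. If $(p-1)\mid n$, then by Fermat's little theorem $g^n\equiv 1\pmod p$, so every term in the sum is $1$ and the sum is $p-1\equiv -1\pmod p$. If $(p-1)\nmid n$, then $g^n\not\equiv 1\pmod p$ (because $g$ has order exactly $p-1$), so I can sum the finite geometric series: $(g^n-1)\sum_{i=0}^{p-2}(g^n)^i = (g^n)^{p-1}-1 = (g^{p-1})^n - 1 \equiv 0\pmod p$. Since $g^n-1$ is a unit modulo $p$, the sum itself must be $\equiv 0\pmod p$. This disposes of both cases.

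The only real subtlety — and the one point I would be careful to state rather than assume — is the existence of a primitive root modulo $p$, i.e.\ that $(\mathbb{Z}/p\mathbb{Z})^\times$ is cyclic; this is a standard fact (it is exactly what underlies Theorem~119 in Hardy and Wright, which the lemma's label references). An alternative that avoids primitive roots entirely: for $(p-1)\nmid n$, pick any $c$ with $c^n\not\equiv 1\pmod p$, and observe that multiplication by $c$ permutes $1,\dotsc,p-1$ modulo $p$, so $c^n\Sigma_n(p)\equiv\Sigma_n(p)\pmod p$, forcing $(c^n-1)\Sigma_n(p)\equiv 0$ and hence $\Sigma_n(p)\equiv 0\pmod p$; the case $(p-1)\mid n$ is immediate from Fermat. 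I would likely present this second argument, since it is shorter and fully self-contained. Either way there is no genuine obstacle here — the lemma is classical and the proof is a few lines.
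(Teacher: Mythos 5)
Your primary argument is exactly the classical primitive-root proof that the paper points to: the paper's ``proof'' of this lemma is simply a citation of Hardy and Wright's Theorem~119 (which uses primitive roots) together with a pointer to an elementary alternative via Pascal's identity, so your geometric-series computation with a generator $g$ is correct and is essentially the cited argument written out. One caveat about the alternative you say you would prefer to present: it is not quite ``fully self-contained,'' because the existence of $c$ with $c^n\not\equiv 1\pmod{p}$ when $(p-1)\nmid n$ is precisely the assertion that the exponent of $(\mathbb{Z}/p\mathbb{Z})^\times$ equals $p-1$, which for an abelian group is equivalent to cyclicity; so you still need either a primitive root or at least Lagrange's bound on the number of roots of $x^{\gcd(n,p-1)}-1$ over the field $\mathbb{Z}/p\mathbb{Z}$. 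With that existence step justified, both of your arguments are complete and correct.
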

\begin{proof}
Hardy and Wright's proof \cite[Theorem 119]{HW} uses primitive roots. For a very elementary proof, see MacMillan and Sondow \cite{MSpascal}.
\end{proof}

We now give necessary and sufficient conditions on solutions to~\eqref{EQ: EMECong}.

\begin{thm} \label{THM: EMCongT}
Given positive integers $n$ and $k$, the congruence
\begin{equation}
	1^n + 2^n + \dotsb + k^n \equiv (k+1)^n \pmod{k}  \label{EQ: EMCong}
\end{equation}
holds if and only if prime $p\mid k$ implies
\begin{enumerate}
	\item[(i).] $n \equiv 0 \pmod{(p-1)}$, and  \label{EMCongT, n}
	\item[(ii).] $\dfrac{k}{p}+1 \equiv 0 \pmod{p}$.  \label{EMCongT, k/p}
\end{enumerate}
In that case $k$ is square-free, and if $n$ is odd, then $k=1$ or $2$.
\end{thm}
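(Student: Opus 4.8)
The plan is to reduce the sum $\Sigma_n(k)$ modulo $k$ by working prime-by-prime via the Chinese Remainder Theorem, using Lemma~\ref{LEM: H&W Thm119} as the engine. First I would observe that $(k+1)^n \equiv 1 \pmod{k}$ trivially, so the congruence \eqref{EQ: EMCong} is equivalent to $\Sigma_n(k) \equiv 1 \pmod{k}$. It then suffices, for each prime power $p^a \| k$, to compute $\Sigma_n(k) \bmod p^a$; but to get the clean equivalence in the statement I expect one only needs information mod $p$ for each $p \mid k$, plus a separate argument that square-free-ness is forced. So the core computation is: evaluate $\Sigma_n(k) \bmod p$ for a prime $p \mid k$.

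To do that, write $k = pm$ and split $1,2,\dots,k$ into $m$ consecutive blocks of length $p$. Since $j^n \equiv (j+p)^n \pmod{p}$, each block contributes $\Sigma_n(p) \bmod p$, so $\Sigma_n(k) \equiv m\,\Sigma_n(p) \equiv (k/p)\,\Sigma_n(p) \pmod{p}$. Now apply Lemma~\ref{LEM: H&W Thm119}: if $(p-1)\nmid n$ then $\Sigma_n(p)\equiv 0$, giving $\Sigma_n(k)\equiv 0 \not\equiv 1 \pmod p$, so the congruence fails; hence $(p-1)\mid n$ is necessary, which is condition~(i). Granting~(i), $\Sigma_n(p)\equiv -1 \pmod p$, so $\Sigma_n(k) \equiv -k/p \pmod p$, and $\Sigma_n(k)\equiv 1 \pmod p$ holds if and only if $k/p + 1 \equiv 0 \pmod p$, which is condition~(ii). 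Running this over all $p \mid k$ and reassembling via CRT gives the stated equivalence — provided one also checks that (i) and (ii) holding for every $p \mid k$ forces $k$ to be square-free, so that knowing $\Sigma_n(k)\bmod p$ for each $p$ is enough. For the square-free claim: if $p^2 \mid k$ then condition~(ii) reads $k/p + 1 \equiv 0 \pmod p$, but $p \mid k/p$, so $1 \equiv 0 \pmod p$, a contradiction. Thus any solution $k$ is automatically square-free, and the prime-by-prime analysis via CRT is legitimate.

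Finally, for the odd-exponent assertion: suppose $n$ is odd and $k>2$ is a solution. Then $k$ has an odd prime factor $p$ (if $k$ were a power of $2$ it would be $2$ by square-free-ness). By~(i), $(p-1)\mid n$; but $p-1$ is even and $n$ is odd, so $(p-1)\nmid n$ unless $p-1 = 1$, i.e. this is impossible for $p\ge 3$. Wait — more carefully: $p-1$ even and $n$ odd force no multiple relation, contradiction. Hence $k$ has no odd prime factor, so $k$ is $1$ or a power of $2$; square-free-ness then gives $k \in \{1,2\}$. (One should double-check the edge case $k=2$: then $p=2$, $p-1=1 \mid n$ always, and $k/p+1 = 2 \equiv 0 \pmod 2$, so $k=2$ genuinely satisfies both conditions for every $n$, consistent with $1+2=3$ reducing correctly.)

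The main obstacle, such as it is, will be getting the logical bookkeeping exactly right in the "only if" direction: I must be careful that failure at a single prime $p\mid k$ really does obstruct the global congruence \eqref{EQ: EMCong}, which is clear since $k \equiv 0 \pmod p$ means $\Sigma_n(k)\equiv 1\pmod k$ implies $\Sigma_n(k)\equiv 1 \pmod p$. Everything else is a routine application of Lemma~\ref{LEM: H&W Thm119} and CRT; no deep input (in particular nothing about Fermat or Wilson quotients) is needed for this theorem — that machinery is presumably reserved for the $k^2$ version.
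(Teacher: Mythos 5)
Your proposal is correct and follows essentially the same route as the paper: the block decomposition giving $\Sigma_n(k)\equiv (k/p)\,\Sigma_n(p)\pmod{p}$, Lemma~\ref{LEM: H&W Thm119} to translate this into conditions (i) and (ii), the observation that (ii) forces $k$ square-free so the prime-by-prime congruences reassemble modulo $k$, and the parity argument for odd $n$. No gaps.
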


\begin{proof}
First note that if $n,k,p$ are \emph{any} positive integers with $p\mid k$, then
\begin{equation}
	\Sigma_n(k) = \sum_{h=1}^{k/p} \sum_{j=1}^p ((h-1)p+j)^n \equiv \frac{k}{p}\, \Sigma_n(p) \pmod{p}.  \label{EQ: factor out k/p}
\end{equation}

Now assume that (i) and (ii) hold whenever prime $p\mid k$. Then, using Lemma~\ref{LEM: H&W Thm119}, both $\Sigma_n(p)$ and $k/p$ are congruent to $-1$ modulo $p$, and so $\Sigma_n(k) \equiv 1\!\pmod{p}$, by \eqref{EQ: factor out k/p}. Thus, as (ii) implies $k$~is square-free, $k$ is a product of distinct primes each of which divides $\Sigma_n(k)-1$. It follows that $\Sigma_n(k) \equiv 1\!\pmod{k}$, implying \eqref{EQ: EMCong}.

Conversely, assume that \eqref{EQ: EMCong} holds, so that $\Sigma_n(k) \equiv 1\!\pmod{k}$. If prime $p\mid k$, then \eqref{EQ: factor out k/p} gives $\dfrac kp\, \Sigma_n(p) \equiv 1\!\pmod{p}$, and so $\Sigma_n(p)\not\equiv 0\!\pmod{p}$. Now Lemma~\ref{LEM: H&W Thm119} yields both $(p-1) \mid n$, proving~(i), and $\Sigma_n(p) \equiv -1\!\pmod{p}$, implying (ii).

If $n$ is odd, then by (i) no odd prime divides $k$. As $k$ is square-free, $k=1$ or $2$.
\end{proof}

\begin{cor} \label{COR: EMC n odd}
The only solution of the \EME{} with odd exponent~$n$ is $1+2=3$.
\end{cor}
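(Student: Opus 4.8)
The plan is to deduce the corollary immediately from Theorem~\ref{THM: EMCongT}. Suppose $(n,k)$ is a solution of the \EME{}~\eqref{EQ: EME} in positive integers with $n$ odd. Reducing~\eqref{EQ: EME} modulo $k$ shows that $(n,k)$ also satisfies the congruence~\eqref{EQ: EMCong}. Hence the final clause of Theorem~\ref{THM: EMCongT} applies and forces $k=1$ or $k=2$.

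It then remains only to check these two values directly in the original equation~\eqref{EQ: EME}. For $k=1$ the equation reads $1^n = 2^n$, which is impossible for $n\ge 1$. For $k=2$ the equation reads $1 + 2^n = 3^n$; for $n=1$ this is the asserted identity $1+2=3$, while for $n\ge 2$ one has $3^n > 2^{n+1} > 2^n + 1$, so there is no further solution. (The inequality $3^n>2^{n+1}$ for $n\ge 2$ is routine, e.g.\ by induction or from $(3/2)^n\ge 9/4>2$.) Combining, the only solution with odd $n$ is $1+2=3$, as claimed.

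I do not expect any real obstacle here: the entire content is carried by Theorem~\ref{THM: EMCongT}, and the only care needed is the bookkeeping step of verifying that among the two permitted values of $k$ only $k=2$ (with $n=1$) yields an actual solution of the Diophantine equation rather than merely of the congruence. One should also note explicitly that passing from~\eqref{EQ: EME} to~\eqref{EQ: EMCong} is valid for every positive integer $k$, including $k=1$, so that no solution is overlooked.
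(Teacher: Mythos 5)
Your proof is correct and follows the paper's own argument exactly: reduce the equation modulo $k$, invoke the final clause of Theorem~\ref{THM: EMCongT} to get $k=1$ or $2$, and then check those two cases directly. You merely spell out the elementary verifications that the paper dismisses as ``clearly'' and ``evidently.''
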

\begin{proof}
Given a solution with $n$ odd, Theorem~\ref{THM: EMCongT} implies $k=1$ or $2$. But $k=1$ is clearly impossible, and $k=2$ evidently forces $n=1$.
\end{proof}

Recall that, when $x$ and $y$ are real numbers, $x\equiv y\!\pmod{1}$ means that $x-y$ is an integer.

\begin{cor} \label{COR: which k and n}
A given positive integer $k$ satisfies the congruence \eqref{EQ: EMCong}, for some exponent $n$, if and only if the Egyptian fraction congruence
\begin{equation}
	\frac{1}{k} + \sum_{p\mid k} \frac{1}{p} \equiv 1 \pmod{1}  \label{EQ: frac mod 1}
\end{equation}
holds, where the summation is over all primes $p$ dividing $k$. In that case, $k$ is square-free, and $n$ is any number divisible by the least common multiple $\LCM\{p-1:\,\text{prime}\,\, p\mid k\}$.
\end{cor}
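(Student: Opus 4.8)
The plan is to deduce Corollary~\ref{COR: which k and n} directly from Theorem~\ref{THM: EMCongT} by reinterpreting conditions (i) and (ii) as a single statement about fractions modulo $1$. First I would observe that the existence of an exponent $n$ satisfying \eqref{EQ: EMCong} is, by the theorem, equivalent to the existence of an $n$ satisfying (i) for every prime $p \mid k$ together with (ii) for every prime $p \mid k$; but (ii) does not involve $n$ at all, and (i) for all $p \mid k$ is satisfiable precisely when we take $n$ to be any common multiple of the $p-1$, i.e. any multiple of $L := \LCM\{p-1 : \text{prime } p \mid k\}$ (such an $n$ always exists, so the only real constraint is (ii)). Hence $k$ works for some $n$ if and only if (ii) holds for every prime $p \mid k$, and in that case $k$ is square-free and the admissible exponents are exactly the multiples of $L$ — which already gives the last two sentences of the corollary.

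Next I would translate the family of congruences (ii) into the single Egyptian-fraction congruence \eqref{EQ: frac mod 1}. The key step is the equivalence
\[
    \Bigl(\text{for every prime } p \mid k:\ \tfrac{k}{p} + 1 \equiv 0 \pmod{p}\Bigr)
    \iff
    \frac{1}{k} + \sum_{p \mid k} \frac{1}{p} \equiv 1 \pmod{1}.
\]
For the forward direction I would note that (ii) for all $p\mid k$ forces $k$ to be square-free (as already remarked in the theorem), write $k = p_1 p_2 \cdots p_r$, and consider the rational number $S := \tfrac1k + \sum_{i} \tfrac{1}{p_i}$. Clearing denominators, $kS = 1 + \sum_i \tfrac{k}{p_i}$ is an integer, and I would show $kS \equiv 0 \pmod{p_j}$ for each $j$: indeed $\tfrac{k}{p_i} \equiv 0 \pmod{p_j}$ for $i \ne j$ since $p_j \mid \tfrac{k}{p_i}$, while $1 + \tfrac{k}{p_j} \equiv 0 \pmod{p_j}$ is exactly (ii) for $p = p_j$. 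Since the $p_j$ are distinct primes all dividing the integer $kS$, their product $k$ divides $kS$, so $S$ is an integer, which is the assertion $S \equiv 1 \pmod 1$ (the right-hand side $1$ being a harmless choice of integer). For the converse direction, assuming \eqref{EQ: frac mod 1}, I would first argue $k$ must be square-free: if $p^2 \mid k$ then $\tfrac1k + \sum_{q\mid k}\tfrac1q$ has a factor of $p$ in the denominator that is not cancelled (the $p$-adic valuation of $\tfrac1k$ is $\le -2$ while every other term has valuation $\ge -1$), contradicting integrality. Then with $k = p_1\cdots p_r$ square-free, integrality of $S$ means $k \mid kS = 1 + \sum_i \tfrac{k}{p_i}$, so in particular $p_j \mid 1 + \sum_i \tfrac{k}{p_i}$; reducing mod $p_j$ and using $p_j \mid \tfrac{k}{p_i}$ for $i \ne j$ leaves $1 + \tfrac{k}{p_j} \equiv 0 \pmod{p_j}$, which is (ii) for $p = p_j$. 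This establishes the equivalence in both directions.

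Finally I would assemble the pieces: $k$ satisfies \eqref{EQ: EMCong} for some $n$ $\iff$ (ii) holds for all $p \mid k$ (by Theorem~\ref{THM: EMCongT} and the remark that (i) is freely satisfiable) $\iff$ the Egyptian fraction congruence \eqref{EQ: frac mod 1} holds (by the equivalence just proved), and in that case $k$ is square-free and $n$ ranges over the multiples of $\LCM\{p-1 : \text{prime } p \mid k\}$. I expect the main obstacle to be purely bookkeeping rather than conceptual: being careful that ``for some $n$'' in the corollary is handled correctly — one must check both that \emph{some} valid $n$ exists whenever (ii) holds (take $n = L$, or $n=0$ is excluded so take $n=L$ itself, which is positive) and that \emph{every} such $n$ is a multiple of $L$ — and cleanly justifying the square-freeness in the converse via a $p$-adic valuation argument without circularity, since in that direction we cannot yet invoke (ii). Everything else is the standard ``distinct primes dividing an integer have product dividing it'' argument.
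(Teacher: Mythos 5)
Your proposal is correct and follows essentially the same route as the paper: both translate the Egyptian fraction congruence into the integer congruence $1+\sum_{p\mid k}k/p\equiv 0\pmod{k}$, observe that this is equivalent to condition (ii) of Theorem~\ref{THM: EMCongT} because each side forces $k$ to be square-free (so that the mod-$k$ statement decomposes into the mod-$p$ statements for the distinct primes $p\mid k$), and then invoke the theorem, with the multiples of $\LCM\{p-1\}$ being exactly the exponents permitted by condition (i). Your version merely spells out the details (the $p$-adic valuation argument for square-freeness and the ``distinct primes dividing an integer'' step) that the paper leaves implicit.
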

\begin{proof}
Condition~\eqref{EQ: frac mod 1} is equivalent to the congruence
\begin{equation}
	1+\sum_{p\mid k} \frac{k}{p} \equiv 0 \pmod{k},  \label{EQ: frac mod k}
\end{equation}
which in turn is equivalent to condition (ii) in Theorem~\ref{THM: EMCongT}, since each implies $k$ is square-free. The theorem now implies the corollary.
\end{proof}

\begin{remark}
In \eqref{EQ: frac mod 1} we write $\equiv 1\!\pmod{1}$, rather than the equivalent $\equiv 0\!\pmod{1}$, in order to contrast the condition with that in Definition~\ref{DFN: ppp} of the next section.
\end{remark}

For solutions to the congruence~\eqref{EQ: EMCong}, we determine the possible values of $k$ with at most four (distinct) prime factors. First we prove a lemma. (An empty product will represent $1$, as usual.)

\begin{lem}  \label{LEM: product-sum}
Let $k=p_1p_2\dotsb p_r$, where the $p_i$ are primes. If $k$ satisfies the integrality condition~\eqref{EQ: frac mod 1}, then for any subset $S\subset \{p_1,\dotsc,p_r\}$, there exists an integer $q=q(r,S)$ such that
\begin{equation}
	q\prod_{p\in S} p = 1 + \sum_{p\in S} \frac{k}{p}.  \label{EQ: CRT}
\end{equation}
\end{lem}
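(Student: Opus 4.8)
The plan is to show that the integer $m := \prod_{p \in S} p$ divides the right-hand side $N := 1 + \sum_{p \in S} k/p$; the integer $q$ of the statement is then simply $N/m$. Because $S$ is a set of \emph{distinct} primes, $m$ is square-free, so it suffices to prove that each individual prime $p' \in S$ divides $N$, and then to invoke the elementary fact that pairwise coprime divisors of an integer have product dividing that integer (this is where the Chinese Remainder Theorem, flagged by the label of~\eqref{EQ: CRT}, enters).

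So fix $p' \in S$. For every other prime $p \in S$ with $p \neq p'$, the quotient $k/p = p_1p_2\dotsb p_r/p$ is a product of primes that still includes $p'$, hence $p' \mid k/p$. Therefore reducing $N$ modulo $p'$ annihilates every summand except the one indexed by $p'$ itself, leaving
\[
	N \equiv 1 + \frac{k}{p'} \pmod{p'}.
\]
Now recall, as in the proof of Corollary~\ref{COR: which k and n}, that the integrality condition~\eqref{EQ: frac mod 1} is equivalent to the congruence~\eqref{EQ: frac mod k}, i.e. $1 + \sum_{p \mid k} k/p \equiv 0 \pmod{k}$, which in particular holds modulo the prime $p' \mid k$. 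Reducing that congruence modulo $p'$ by the same observation — each term $k/p$ with $p \neq p'$ is divisible by $p'$ — yields $1 + k/p' \equiv 0 \pmod{p'}$ (equivalently, this is condition~(ii) of Theorem~\ref{THM: EMCongT}). Combining the two statements gives $p' \mid N$.

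Since this holds for every $p' \in S$ and these primes are distinct, their product $m$ divides $N$, so $q := N/m$ is an integer satisfying~\eqref{EQ: CRT}; the degenerate case $S = \emptyset$ is immediate, as then both sides equal $1$ and $q = 1$. I do not expect a genuine obstacle here: the only points requiring care are the bookkeeping in the reduction modulo $p'$ (keeping track of which summands survive) and the routine use of square-freeness to pass from "each $p' \in S$ divides $N$'' to "$\prod_{p\in S} p$ divides $N$.''
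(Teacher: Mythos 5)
Your proof is correct and is essentially the argument the paper itself gives (its ``alternate proof''): reduce $1+\sum_{p\in S}k/p$ modulo each $p'\in S$, note that only the term $k/p'$ survives, invoke condition~(ii) of Theorem~\ref{THM: EMCongT} (equivalently~\eqref{EQ: frac mod k}) to get $p'\mid N$, and conclude by distinctness of the primes in $S$. The paper also sketches the same conclusion via the Chinese Remainder Theorem, which you mention in passing, so nothing is genuinely different.
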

\begin{proof}
This follows from Corollary~\ref{COR: which k and n} and Theorem~\ref{THM: EMCongT} condition ~(ii), using the Chinese Remainder Theorem. For an alternate proof, denote the summation in \eqref{EQ: CRT} by $\Sigma$, and note that \eqref{EQ: frac mod 1} implies $1+\Sigma\equiv 1+\dfrac kp\equiv 0\!\pmod{p}$, for each $p\in S$. Then, since $k$ is square-free, $\prod_{p\in S}p$ divides $1+\Sigma$, and the lemma follows.
\end{proof}

\begin{prop}  \label{PROP: r=1,2,3,4 gives k=2,6,42,1806}
Let $k$ be a product of $r$ primes. Suppose $1^n + 2^n + \dotsb + k^n \equiv (k+1)^n\!\pmod{k}$, for some exponent~$n>0$; equivalently, suppose \eqref{EQ: frac mod 1} holds. If $r=0,1,2,3,4$, then $k=1,2,6,42,1806$, respectively.
\end{prop}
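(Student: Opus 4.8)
The plan is to induct on $r$, at each stage using Corollary~\ref{COR: which k and n} and especially Lemma~\ref{LEM: product-sum} to pin down the primes $p_1 < p_2 < \dotsb < p_r$ one at a time. The base cases $r=0$ and $r=1$ are immediate: the empty product is $k=1$, and if $k=p$ is prime then \eqref{EQ: frac mod 1} reads $1/p + 1/p \equiv 1 \pmod 1$, forcing $p \mid 2$, so $p=2$. For the inductive step I would order the prime factors increasingly and show that $p_1 = 2$ always: reducing \eqref{EQ: frac mod k} modulo the smallest prime, or more simply observing that the Egyptian-fraction sum $1/k + \sum 1/p$ must be a positive integer and is at most $1/k + r/p_1 < 2$ once one checks it cannot be too small, pins the sum to be exactly $1$; and an integer equal to $1$ with a term $1/p_1$ that dominates the tail forces $p_1 = 2$. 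More robustly, I would apply Lemma~\ref{LEM: product-sum} with $S = \{p_1\}$ to get $q p_1 = 1 + k/p_1$, and with $S = \emptyset$ trivially; the real leverage comes from taking nested subsets.

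Concretely, having fixed $p_1 = 2$, I would apply Lemma~\ref{LEM: product-sum} to $S = \{p_1, p_2\}$: then $p_1 p_2 \mid 1 + k/p_1 + k/p_2$. Writing $k = p_1 p_2 \cdots p_r$ and dividing through, one extracts a congruence that, combined with the condition (ii) of Theorem~\ref{THM: EMCongT} at $p_2$ (namely $k/p_2 + 1 \equiv 0 \pmod{p_2}$, i.e. $p_2 \mid 1 + p_1 p_3 \cdots p_r$), forces $p_2$ to be the next term in the Sylvester-like sequence. The clean way to see it: condition (ii) at $p_1 = 2$ says $2 \mid 1 + k/2$, so $k/2$ is odd — automatic since all $p_i$ are odd for $i \ge 2$. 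Condition (ii) at $p_2$ says $p_2 \mid 1 + 2 p_3 \cdots p_r$. And the global integrality \eqref{EQ: frac mod 1} with all primes known up to $p_{i-1}$ leaves only finitely many candidates for $p_i$ because the partial sum $1/2 + 1/p_2 + \dotsb$ must still be able to reach an integer value $\le 2$ with the remaining $r - i + 1$ terms plus $1/k$. Pushing this through gives $p_2 = 3$, then $p_3 = 7$, then $p_4 = 43$ — wait, one must be careful: $k = 1806 = 2 \cdot 3 \cdot 7 \cdot 43$, so indeed $p_4 = 43$ and $k/p_4 = 42$, with $42 + 1 = 43 \equiv 0 \pmod{43}$, consistent.

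The cleanest bookkeeping is to recognize the recursion $p_1 p_2 \cdots p_{i} \mid 1 + p_1 p_2 \cdots p_{i-1}$ ... let me restate: from condition (ii) at $p_i$ one gets $p_i \mid 1 + (k/p_i)$; since $k/p_i$ runs over the product of the other primes, and since by induction the smaller primes are forced, $p_i$ divides $1 + 2 \cdot 3 \cdots \widehat{p_i} \cdots p_r$, and minimality of our increasing ordering plus the bound on how large $p_i$ can be (it must divide something not much larger than $p_1 \cdots p_{i-1} \cdot (\text{stuff})$, but the tightest constraint is that $1/2 + 1/3 + 1/7 + 1/43 = 1805/1806$, so adding $1/k$ gives exactly $1$ and no further prime can be appended) does the job. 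So for $r \le 4$ the chain $2, 3, 7, 43$ is forced and $k = 1, 2, 6, 42, 1806$ respectively.

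The main obstacle I anticipate is making the "finitely many candidates, and only one works" step rigorous without hand-waving: one needs a genuine bound showing that at stage $i$ the prime $p_i$ cannot be chosen freely. The right tool is the combination of (a) $p_i$ must divide the explicit integer $1 + \prod_{j \ne i} p_j$ from Theorem~\ref{THM: EMCongT}(ii), which for the smallest available $p_i$ beyond $p_{i-1}$ leaves essentially one option, and (b) the fact that $1/2 + 1/3 + 1/7 + \dotsb$ is a strictly increasing sequence of rationals whose partial sums approach but never exceed $1$ in the relevant range, so once the partial Egyptian sum is determined there is no room for a fifth prime when $r=4$ and at each earlier stage the "next" prime is uniquely the one that keeps $1/k + \sum 1/p$ an integer. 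I would present this as: fix $p_1 = 2$; then $p_2 \mid 1 + k/p_2$ and oddness forces $p_2 = 3$ after ruling out larger primes by the integrality of $1/2 + 1/p_2 + (\text{remaining terms})$; similarly $p_3 = 7$ and $p_4 = 43$, and for $r = 4$ check $1 + 2\cdot3\cdot7 = 43$ and $1/1806 + 1/2 + 1/3 + 1/7 + 1/43 = 1$, completing the proof.
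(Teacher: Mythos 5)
Your overall strategy \emdash{} reduce the congruence \eqref{EQ: frac mod 1} to the exact equality $1/k+\sum_{p\mid k}1/p=1$ by noting that for $r\le4$ the left side is a positive integer less than $2$, and then pin down $p_1,\dotsc,p_r$ in increasing order by size considerations \emdash{} is viable and genuinely different from the paper's argument, which never passes to the equality and instead works purely with divisibility: it applies Lemma~\ref{LEM: product-sum} to nested subsets to produce the relation $q_rq_{r-1}p_{r-1}=p_1\dotsm p_{r-2}(p_1\dotsm p_{r-1}+1)+q_r$, whence $p_{r-1}\mid(p_1\dotsm p_{r-2}+q_r)$ sandwiched between explicit bounds, and finishes each case with short divisibility chains (e.g.\ $p_1\mid(p_2p_3+1)$ and $p_1\mid(p_2p_3-1)$ force $p_1=2$ when $r=3$). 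Your route, if completed, would buy a more transparent ``Sylvester sequence'' picture and would show in passing that every solution with $r\le4$ is a primary pseudoperfect number; the paper's route avoids any real-number estimates beyond elementary inequalities between the primes.

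However, as written your proposal has a genuine gap exactly where you anticipate one: the step that determines $p_3$ and $p_4$ is not carried out, and neither of the two mechanisms you invoke works as stated. Condition (ii) of Theorem~\ref{THM: EMCongT} at $p_3$ (for $r=4$, with $p_1=2$, $p_2=3$ already fixed) reads $p_3\mid 1+6p_4$, which involves the unknown $p_4$ and so does not ``leave essentially one option'' for $p_3$. And the ``no room'' size argument, made precise, only yields $1/p_3<1/6<3/p_3$, i.e.\ $6<p_3<18$, leaving the candidates $p_3\in\{7,11,13,17\}$; one must then solve $1/6=1/p_3+1/p_4+1/(6p_3p_4)$, i.e.\ $p_4=(6p_3+1)/(p_3-6)$, for each candidate and check that only $p_3=7$ gives an integer (namely $p_4=43$). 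That finite verification \emdash{} or some substitute for it, such as the paper's bound $3<p_3\mid(6+q_4)<12$ from \eqref{EQ: mid<} \emdash{} is the actual content of the $r=3,4$ cases, and it is absent from your writeup. A smaller but similar issue: you assert $p_2=3$ follows from ``oddness,'' but it in fact requires the size estimate $1/(2p_2\dotsm p_r)+1/p_2+\dotsb+1/p_r<1/2$ when $p_2\ge5$. The gap is fillable, but until the candidate-elimination computations are done the proposal is a plan rather than a proof.
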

\begin{proof}
Theorem~\ref{THM: EMCongT} implies $k = p_1p_2 \dotsm p_r$, where $p_1 < p_2 < \dotsb < p_r$ are primes.

($r=0,1$). If $r=0$, then $k=1$. If $r=1$, then $k=p_1$ is prime, and \eqref{EQ: frac mod k} yields $k \mid 2$, so that \mbox{$k=2$}.

($r=2$). For $k=p_1p_2$, congruence~\eqref{EQ: frac mod k} gives $p_2 \mid (p_1+1)$. Since $p_2 \ge p_1+1$, it follows that $p_2 = p_1+1$. As $p_2$ and $p_1$ are prime, $p_2=3$ and $p_1=2$, and hence $k=6$.

($r=3,4$). In general, if $k = p_1\dotsm p_r$, where $p_1< \dotsm <p_r$ are primes, then by Lemma~\ref{LEM: product-sum}, for $i=1,\dotsc,r$ there exists an integer $q_i$ such that $q_ip_i=\dfrac{k}{p_i}+1$. In particular,
\begin{equation}
	q_rp_r=p_1\dotsm p_{r-2}p_{r-1}+1.  \label{EQ: q_rp_r}
\end{equation}
Hence if $r>2$, so that $p_{r-1}<p_r-1$, then $q_r<p_1\dotsm p_{r-2}$. We also have
\begin{equation*}
	q_rq_{r-1}p_{r-1}=q_r(p_1\dotsm p_{r-2}p_r+1)=p_1\dotsm p_{r-2}(p_1\dotsm p_{r-1}+1)+q_r,
\end{equation*}
and so
\begin{equation}
	p_{r-2}< p_{r-1}\mid(p_1\dotsm p_{r-2}+q_r)<2p_1\dotsm p_{r-2}.  \label{EQ: mid<}
\end{equation}

Now take $r=3$, so that $k=p_1p_2p_3$. Then $q_2p_2=p_1p_3+1 $ and $q_3p_3=p_1p_2+1$, for some integers $q_2$ and $q_3$. By  \eqref{EQ: mid<} we have $p_1<p_2\mid(p_1+q_3)<2p_1$. Hence $p_2=p_1+q_3$. Substituting $q_3=p_2-p_1$ into $q_3p_3=p_1p_2+1$ yields $p_1\mid(p_2p_3-1)$. As $p_1\mid(p_2p_3+1)$, we conclude that $p_1\mid2$. Therefore $p_1=2$. Then $p_3 \mid (2p_2+1)$. As $p_3>p_2$, we get $p_3=2p_2+1$. Then we have $p_2\mid (2p_3+1)=4p_2+3$, and so $p_2\mid 3$. Therefore $p_2=3$, and hence $p_3=7$. Thus $k=2\cdot3\cdot7=42$.

Finally, take $r=4$. Lemma~\ref{LEM: product-sum} with $S=\{p_1,\dotsc,p_r\}$ and $r=4$ gives an integer~$q$ such that
\begin{equation*}
	qp_1p_2p_3p_4=p_1(p_2p_3+p_2p_4+p_3p_4)+p_2p_3p_4+1<4p_2p_3p_4,
\end{equation*}
so that $qp_1<4$. Hence $q=1$, and $p_1=2$ or $3$. Now
\begin{equation*}
(p_1-1)p_2p_3p_4=p_1(p_2p_3+p_2p_4+p_3p_4)+1<3p_1p_3p_4,
\end{equation*}
and so $(p_1-1)p_2<3p_1$. This implies $p_1=2$ and $p_2=3$. Thus $k=2\cdot3p_3p_4$. Then \eqref{EQ: q_rp_r} and \eqref{EQ: mid<} give $q_4p_4=6p_3+1$ and $3<p_3\mid(6+q_4)<12$. The only solution is $(q_4,p_3,p_4)=(1,7,43)$, and so $k=2\cdot3\cdot7\cdot43=1806$. This completes the proof.
\end{proof}

\begin{example} \label{EX: r=5} Take $k=1806$ in Corollary~\ref{COR: which k and n}. Since $1806 = 2\cdot3\cdot7\cdot43$ and
\begin{equation*}
	\frac{1}{1806} + \frac{1}{2} + \frac{1}{3} + \frac{1}{7} + \frac{1}{43} = 1,
\end{equation*}
and since $\LCM(1,2,6,42)=42$, one solution of ~\eqref{EQ: EMCong} is
\begin{equation*}
	1^{42} + 2^{42} + \dotsb + 1806^{42} \equiv 1807^{42} \pmod{1806}.
\end{equation*}
\end{example}

\section{Primary Pseudoperfect Numbers} \label{SEC: PPP}

Recall that a positive integer is called \emph{perfect} if it is the sum of \emph{all} its proper divisors, and \emph{pseudoperfect} if it is the sum of \emph{some} of its proper divisors~\cite[B2]{Guy}.

\begin{dfn}  \label{DFN: ppp}
(From \cite{Butske}.) A \emph{primary pseudoperfect number} is an integer $K>1$ that satisfies the Egyptian fraction equation
\begin{equation}
	\frac{1}{K} + \sum_{p \mid K} \frac{1}{p} = 1,  \label{EQ: ppp def}
\end{equation}
where the summation is over all primes dividing $K$.
\end{dfn}

Multiplying \eqref{EQ: ppp def} by $K$, we see that $K$ is square-free, and that every primary pseudoperfect number, except $2$, is pseudoperfect.

\begin{cor} \label{COR: PPPs are solutions}
Every primary pseudoperfect number $K$ is a solution to the congruence~\eqref{EQ: EMCong}, for some exponent~$n$.
\end{cor}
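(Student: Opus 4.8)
The plan is to deduce Corollary~\ref{COR: PPPs are solutions} directly from Corollary~\ref{COR: which k and n}, since the latter already characterizes exactly which integers $k$ solve the congruence \eqref{EQ: EMCong} for some exponent $n$: namely, those $k$ satisfying the integrality condition \eqref{EQ: frac mod 1}. So the whole task reduces to checking that a primary pseudoperfect number $K$ satisfies \eqref{EQ: frac mod 1}.

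First I would recall that, by Definition~\ref{DFN: ppp}, a primary pseudoperfect number $K>1$ satisfies the Egyptian fraction \emph{equation}
\begin{equation*}
	\frac{1}{K} + \sum_{p\mid K} \frac{1}{p} = 1.
\end{equation*}
Since the left-hand side equals $1$ exactly, in particular the difference between the left-hand side and $1$ is an integer (namely $0$), so the congruence
\begin{equation*}
	\frac{1}{K} + \sum_{p\mid K} \frac{1}{p} \equiv 1 \pmod{1}
\end{equation*}
holds trivially. That is precisely condition \eqref{EQ: frac mod 1} with $k=K$. Note also that $K>1$ ensures $K$ is a genuine candidate (the corollary's hypothesis is on positive integers, and $K$ qualifies).

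Then I would invoke Corollary~\ref{COR: which k and n}: since $K$ satisfies \eqref{EQ: frac mod 1}, there exists an exponent $n$ — indeed any multiple of $\LCM\{p-1 : \text{prime } p \mid K\}$ works — for which $K$ satisfies the congruence \eqref{EQ: EMCong}. This completes the proof. Honestly, there is no real obstacle here: the content all lives in Corollary~\ref{COR: which k and n}, and the only thing to observe is the (nearly tautological) implication that an \emph{equality} of real numbers forces the corresponding congruence modulo $1$. The one point worth stating explicitly, to forestall confusion, is the contrast flagged in the Remark: primary pseudoperfect numbers satisfy the stronger equation \eqref{EQ: ppp def}, whereas solutions $k$ of the congruence only need the weaker \eqref{EQ: frac mod 1}, so the class of congruence-solutions is (a priori) larger — Example~\ref{EX: r=5} and the whole thrust of Section~\ref{SEC: PPP} being that the two classes may in fact coincide.
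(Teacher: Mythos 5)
Your proof is correct and matches the paper's own argument, which simply notes that the corollary is immediate from Definition~\ref{DFN: ppp} and Corollary~\ref{COR: which k and n}. You spell out the (trivial) step that the exact equality \eqref{EQ: ppp def} implies the congruence \eqref{EQ: frac mod 1}, which is exactly the intended reasoning.
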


\begin{proof}
This is immediate from Definition~\ref{DFN: ppp} and Corollary~\ref{COR: which k and n}.
\end{proof}

A priori, the equality~\eqref{EQ: ppp def} is a stronger condition than the congruence~\eqref{EQ: frac mod 1} in Corollary~\ref{COR: which k and n}. However, \eqref{EQ: ppp def} and~\eqref{EQ: frac mod 1} may in fact be equivalent, because all the known solutions of \eqref{EQ: frac mod 1} also satisfy~\eqref{EQ: ppp def}\emdash see~\cite{Butske}. In other words, primary pseudoperfect numbers may be the only solutions~$k$ to the congruence~\eqref{EQ: EMCong}, except for $k=1$.

According to~\cite{Butske}, Table~1 contains all primary pseudoperfect numbers $K$ with $r \le 8$ (distinct) prime factors. In particular, for each $r=1,2,\ldots,8$, there exists exactly one such $K$ (as conjectured by Ke and Sun~\cite{Ke_Sun} and by Cao, Liu, and Zhang~\cite{Cao_Liu_Zhang}). No $K$ with $r>8$ prime factors is known. As with perfect numbers, no odd primary pseudoperfect number has been discovered.

\begin{center}
\begin{table}[ht]
\caption{(from~\cite{Butske}) The primary pseudoperfect numbers $K$ with $r \le 8$ prime factors}
	\label{TABLE: PPPs}
\begin{tabular}{|c|c|c|}
	\hline
	\textit{r} & \textit{K} & prime factors  \\
	\hline
	$1$ & $2$ & $2$  \\
	$2$ & $6$ & $2,3$  \\
	$3$ & $42$ & $2,3,7$  \\
	$4$ & $1806$ & $2,3,7,43$  \\
	$5$ & $47058$ & $2,3,11,23,31$  \\
	$6$ & $2214502422$ & $2,3,11,23,31,47059$  \\
	$7$ & $52495396602$ & $2,3,11,17,101,149,3109$  \\
	$8$ & $8490421583559688410706771261086$ & $2,3,11,23,31,47059,2217342227,1729101023519$  \\
	\hline
\end{tabular}
\end{table}
\end{center}
\vspace{-1.7em}

Table~\ref{TABLE: PPPs} was obtained in~\cite{Butske} using computation and computer search techniques. Note that the cases $r=1,2,3,4$ follow a fortiori from our Proposition~\ref{PROP: r=1,2,3,4 gives k=2,6,42,1806}.

\begin{center}
\begin{table}[ht]
\caption{The known solutions to $1^n + 2^n + \dotsb + k^n \equiv (k+1)^n\!\pmod{k}$}
	\label{TABLE: EMCong solutions}
\begin{tabular}{|c|c|}
	\hline
	\textit{k} & \textit{n} is any multiple of  \\
	\hline
	$1$ & $1$  \\
	$2$ & $1$  \\
	$6$ & $2$  \\
	$42$ & $6$  \\
	$1806$ & $42$  \\
	$47058$ & $330$  \\
	$2214502422$ & $235290$  \\
	$52495396602$ & $310800$  \\
	$8490421583559688410706771261086$ & $1863851053628494074457830$  \\
	\hline
\end{tabular}
\end{table}
\end{center}
\vspace{-1.7em}

Table~\ref{TABLE: EMCong solutions} was calculated from Table~\ref{TABLE: PPPs}, using Corollaries~\ref{COR: which k and n} and~\ref{COR: PPPs are solutions}.

\begin{example} \label{EX: r=8} The simplest case of the congruence~\eqref{EQ: EMCong} in which $k$ has $8$ prime factors is
\begin{align*}
	\sum_{j=1}^{8490421583559688410706771261086} j^{1863851053628494074457830}& \\
		\equiv 8490421583559688410706771261087&^{1863851053628494074457830} \\
		\pmod{&8490421583559688410706771261086}.
\end{align*}
\end{example}

Zagier gave three characterizations of the numbers $1,2,6,42,1806$. We add two more.

\begin{prop} \label{PROP: Zagier}
Each of the following five conditions is equivalent to $k\in\{1,2,6,42,1806\}$.
\begin{enumerate}
	\item The congruence $a^{k+1} \equiv a\!\pmod{k}$ holds, for all $a$.  \label{Zagier, j^{k+1}}
	\item $k=p_1p_2\dotsm p_r$, where $r\ge 0$, the $p_i$ are distinct primes, and \mbox{$(p_i-1)\mid k$}.  \label{Zagier, squarefree}
 	\item $k=p_1p_2\dotsm p_r$, where $r\ge 0$ and \mbox{$p_i=p_1 \dotsm p_{i-1}+1$} is prime, \mbox{$i = 1,\dotsc,r$}.  \label{Zagier, product+1}
	\item $k$ is a product of at most $4$ primes, and $1^n + 2^n + \dotsb + k^n \equiv (k+1)^n\!\pmod{k}$, for some exponent~$n$.  \label{Zagier, EMCongr}
	\item Either $k=1$ or $k$ is a primary pseudoperfect number with $4$ or fewer prime factors.  \label{Zagier, PPP}
\end{enumerate}
\end{prop}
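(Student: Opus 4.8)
The plan is to prove that each of the five conditions holds if and only if $k\in\{1,2,6,42,1806\}$; transitivity then yields all the stated equivalences. Two of the conditions follow almost at once from results already in hand. In condition~(iv) the forward implication is exactly Proposition~\ref{PROP: r=1,2,3,4 gives k=2,6,42,1806}, while for the converse one notes that $k=1$ satisfies the congruence trivially (both sides are integers, so it holds modulo~$1$) and that $2,6,42,1806$ are primary pseudoperfect, so Corollary~\ref{COR: which k and n} produces an admissible~$n$. Condition~(v) is handled the same way: a primary pseudoperfect number satisfies~\eqref{EQ: frac mod 1}, so Proposition~\ref{PROP: r=1,2,3,4 gives k=2,6,42,1806} pins it down to $\{2,6,42,1806\}$, and conversely one checks the Egyptian-fraction identity $\tfrac1K+\sum_{p\mid K}\tfrac1p=1$ for $K=2,6,42,1806$, together with the separate value $k=1$.

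Next I would dispatch condition~(i) by the classical Korselt-type argument. If $a^{k+1}\equiv a\!\pmod k$ for all $a$, then whenever $p^2\mid k$ the choice $a=p$ gives $p^2\mid p^{k+1}-p=p(p^{k}-1)$, which is impossible; hence $k$ is square-free. For each prime $p\mid k$, taking $a$ to be a primitive root modulo~$p$ forces $a^{k}\equiv 1\!\pmod p$, so $(p-1)\mid k$, which is condition~(ii). Conversely, if $k$ is square-free and $(p-1)\mid k$ for every $p\mid k$, then for each such $p$ and every $a$ one has $a^{k+1}\equiv a\!\pmod p$ (treat the cases $p\mid a$ and $p\nmid a$ separately, writing $a^{k}=(a^{p-1})^{k/(p-1)}$), and the Chinese Remainder Theorem upgrades this to a congruence modulo~$k$. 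Hence conditions~(i) and~(ii) are equivalent.

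It then remains to identify the $k$ satisfying conditions~(ii) and~(iii). For (iii) this is a finite computation: the recursion $p_i=p_1\dotsm p_{i-1}+1$ is deterministic, producing $p_1=2$, $p_2=3$, $p_3=7$, $p_4=43$ (all prime), and then $p_5=1807=13\cdot139$ is composite, so $r\le4$ and $k\in\{1,2,6,42,1806\}$; the converse merely exhibits these factorizations. For (ii), order the prime factors $p_1<\dotsb<p_r$ and argue by induction: since $p_i-1<p_i$, every prime factor of $p_i-1$ is one of $p_1,\dotsc,p_{i-1}$, and as $k$ is square-free with $p_i-1\mid k$ we get $p_i-1\mid p_1\dotsm p_{i-1}$; using in addition $p_i-1\ge p_{i-1}$, a short inspection of divisors forces $p_1=2$, $p_2=3$, $p_3=7$, $p_4=43$ in turn, while the divisors of $1806$ that exceed~$42$, namely $43,86,129,258,301,602,903,1806$, are none of them one less than a prime, so no $p_5$ can occur and $r\le4$. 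The converse is the chain $1\mid k$, $2\mid6$, $6\mid42$, $42\mid1806$.

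There is no deep obstacle here; the single piece of genuine arithmetic is the factorization $1807=13\cdot139$, which is exactly what truncates the list at $1806$ in conditions~(ii) and~(iii) (and, via Proposition~\ref{PROP: r=1,2,3,4 gives k=2,6,42,1806}, in the others as well). What needs care is the degenerate case $r=0$, where the empty product gives $k=1$ and must be adjoined by hand since $1$ is not a primary pseudoperfect number, and the elementary but slightly fussy divisor bookkeeping in the inductive step for condition~(ii).
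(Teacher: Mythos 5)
Your proof is correct, but it is considerably more self-contained than the paper's. The paper disposes of the proposition in three lines: conditions (i), (ii), (iii) are referred wholesale to the published solution of Zagier's problem, (iv) is read off from Proposition~\ref{PROP: r=1,2,3,4 gives k=2,6,42,1806}, and (v) from Corollary~\ref{COR: PPPs are solutions} together with (iv). Your treatment of (iv) and (v) matches the paper's (same appeal to Proposition~\ref{PROP: r=1,2,3,4 gives k=2,6,42,1806} for the forward direction, same use of the Egyptian-fraction condition and Corollary~\ref{COR: which k and n} for the converse, same hand-adjunction of $k=1$). Where you genuinely diverge is in actually proving the Zagier part: the Korselt-style argument that (i) forces $k$ square-free with $(p-1)\mid k$ via primitive roots, and conversely via $a^{k}=(a^{p-1})^{k/(p-1)}$ and the Chinese Remainder Theorem, is the standard and correct route to (i) $\Leftrightarrow$ (ii); your inductive divisor bookkeeping for (ii) is sound (at each step $p_i-1$ is a square-free divisor of $k$ all of whose prime factors precede $p_i$, hence $p_i-1\mid p_1\dotsm p_{i-1}$, and the divisors of $1806$ exceeding $42$ yield the candidates $44,87,130,259,302,603,904,1807$, none prime); and the deterministic recursion for (iii) terminates at $1807=13\cdot139$ exactly as you say. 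What your approach buys is a proof that does not depend on an external, informally published solution set; what it costs is a page of elementary but fussy casework that the paper deliberately outsources. Your attention to the $r=0$ degenerate case and to the fact that $1$ is not itself a primary pseudoperfect number is a point the paper's terse proof glosses over.
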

\begin{proof}
For (i), (ii), (iii), see the solution to the first problem of Zagier~\cite{Zagier}. Proposition \ref{PROP: r=1,2,3,4 gives k=2,6,42,1806} yields~(iv). Corollary~\ref{COR: PPPs are solutions} and (iv) give~(v).
\end{proof}

\section{Supercongruences}  \label{SEC: Super}

If the conditions in Theorem \ref{THM: EMCongT} are satisfied, the following corollary shows that the congruence~\eqref{EQ: factor out k/p} can be replaced with a ``supercongruence'' (compare Zudilin~\cite{WZsuper}).

\begin{cor} \label{COR: Sigma_n(k) mod p^2}
If $1^n + 2^n + \dotsb + k^n\equiv (k+1)^n\!\pmod{k}$ and prime $p \mid k$, then
\begin{equation}
 	\Sigma_n(k) \equiv \frac{k}{p}\, \Sigma_n(p) \pmod{p^2}.  \label{EQ: p^2}
\end{equation}
\end{cor}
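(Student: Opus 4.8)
The plan is to sharpen the congruence~\eqref{EQ: factor out k/p} to a statement modulo $p^2$ by expanding each summand with the binomial theorem and then showing that the extra term vanishes modulo $p^2$. Starting from the exact identity contained in~\eqref{EQ: factor out k/p},
\begin{equation*}
	\Sigma_n(k) = \sum_{h=1}^{k/p} \sum_{j=1}^{p} \bigl((h-1)p+j\bigr)^n,
\end{equation*}
I would write $\bigl((h-1)p+j\bigr)^n = j^n + n(h-1)p\,j^{n-1} + \binom{n}{2}(h-1)^2p^2 j^{n-2} + \dotsb$, drop every term carrying a factor $p^2$, and sum first over $j$ and then over $h$, using $\sum_{h=1}^{k/p}(h-1) = \frac{(k/p)(k/p-1)}{2}$. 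This yields
\begin{equation*}
	\Sigma_n(k) \equiv \frac{k}{p}\,\Sigma_n(p) + np\cdot\frac{(k/p)(k/p-1)}{2}\,\Sigma_{n-1}(p) \pmod{p^2},
\end{equation*}
so everything reduces to showing that the error term $E := np\cdot\frac{(k/p)(k/p-1)}{2}\,\Sigma_{n-1}(p)$ is divisible by $p^2$.

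For the main case, $p$ odd, I would invoke Theorem~\ref{THM: EMCongT}: the hypothesis of the corollary is precisely~\eqref{EQ: EMCong}, so $(p-1)\mid n$, hence $n-1\equiv -1\pmod{p-1}$ and thus $(p-1)\nmid(n-1)$; also $n\ge p-1\ge 2$, so $n-1\ge 1$ and Lemma~\ref{LEM: H&W Thm119} gives $\Sigma_{n-1}(p)\equiv 0\pmod p$. Since $p$ is odd, $\frac{(k/p)(k/p-1)}{2}$ is an integer, so the factors $p$ and $\Sigma_{n-1}(p)$ together force $p^2\mid E$.

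The case $p=2$ requires a little extra care, because of the denominator $2$ and because $\Sigma_{n-1}(2)=1+2^{n-1}$ is odd for $n\ge 2$; here $E = 2n\cdot\frac{(k/2)(k/2-1)}{2}\,\Sigma_{n-1}(2)$ and I must show $4\mid E$. If $k=2$, then $k/2-1=0$ and $E=0$. Otherwise Theorem~\ref{THM: EMCongT} makes $k$ square-free, so $k/2$ is an odd integer greater than $1$, whence $k$ has an odd prime factor $q$; then $(q-1)\mid n$ with $q-1$ even, so $n$ is even and $4\mid 2n$, giving $4\mid E$. Combining the two cases proves~\eqref{EQ: p^2}.

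I expect the binomial expansion and the double summation to be entirely routine; the one point that needs attention is the parity bookkeeping when $p=2$, namely recovering the factor of $2$ that is absorbed into the denominator — which is exactly where the evenness of $n$, forced by $k$ having an odd prime factor, is used.
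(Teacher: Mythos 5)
Your proposal is correct and follows essentially the same route as the paper: expand \eqref{EQ: factor out k/p} binomially modulo $p^2$ to get the error term $\tfrac{1}{2}a(a-1)np\,\Sigma_{n-1}(p)$ with $a=k/p$, kill it for odd $p$ via $(p-1)\nmid(n-1)$ and Lemma~\ref{LEM: H&W Thm119}, and handle $p=2$ by a parity argument. The only cosmetic difference is that you deduce the evenness of $n$ (when $k\ne 2$) from an odd prime factor of $k$, whereas the paper invokes the last sentence of Theorem~\ref{THM: EMCongT} directly; both are valid.
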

\begin{proof}
By Theorem \ref{THM: EMCongT}, it suffices to prove the more general statement that, if prime $p\mid k$ and $(p-1)\mid n$, and if either $k=2$ or $n$ is even, then~\eqref{EQ: p^2} holds. Set $a=k/p$ in the equation~\eqref{EQ: factor out k/p}. Expanding and summing, we see that
\begin{equation*}
 	\Sigma_n(k)\equiv a\Sigma_n(p)+\frac{1}{2}\,a(a-1)np\Sigma_{n-1}(p) \pmod{p^2}.
\end{equation*}
If $p>2$, then $(p-1)\mid n$ implies $(p-1)\nmid (n-1)$, and Lemma~\ref{LEM: H&W Thm119} gives $p\mid \Sigma_{n-1}(p)$. In case $p=2$, either $a=k/2=1$ or $2\mid n$, and each implies $2\mid (1/2)a(a-1)n$. In all cases, \eqref{EQ: p^2} follows.
\end{proof}

For an extension of Theorem \ref{THM: EMCongT} itself to a supercongruence, we need a definition and a lemma.

\begin{dfn}
By Fermat's and Wilson's theorems, for any prime $p$ the \emph{Fermat quotient}
\begin{align}
	q_p(j) &:= \frac{j^{p-1}-1}{p}, \quad p\nmid j,  \label{EQ: qDef}
\intertext{and the \emph{Wilson quotient}}
 	W_p &:= \frac{(p-1)!+1}{p}  \notag
\end{align}
are integers.
\end{dfn}

\begin{lem}[Lerch~\cite{Lerch}] \label{LEM: Lerch}
If $p$ is an odd prime, then the Fermat and Wilson quotients are related by \emph{Lerch's formula}
\begin{equation*}
	\sum_{j=1}^{p-1} q_p(j) \equiv W_p \pmod{p}.
\end{equation*}
\end{lem}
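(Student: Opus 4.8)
The plan is to work modulo $p^2$ throughout and to exploit the two \emph{defining} relations $j^{p-1} = 1 + p\,q_p(j)$ for $1\le j\le p-1$ and $(p-1)! = -1 + pW_p$, linking them by raising the factorial to the $(p-1)$-st power. Concretely, I would compute the product $\prod_{j=1}^{p-1} j^{p-1}$ in two different ways.

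On the one hand, $\prod_{j=1}^{p-1} j^{p-1} = \bigl((p-1)!\bigr)^{p-1} = (-1+pW_p)^{p-1}$, and the binomial theorem gives, modulo $p^2$, only the $i=0$ and $i=1$ terms; since $p$ is odd (so $p-1$ is even and $p-2$ is odd) this is $(-1)^{p-1} + (p-1)(-1)^{p-2}\,pW_p = 1-(p-1)pW_p \equiv 1+pW_p \pmod{p^2}$. On the other hand, substituting $j^{p-1}=1+p\,q_p(j)$ and expanding the product, every term other than the constant $1$ and the linear terms $p\,q_p(j)$ carries a factor $p^2$, so
$$\prod_{j=1}^{p-1} j^{p-1} \equiv 1 + p\sum_{j=1}^{p-1} q_p(j) \pmod{p^2}.$$
Comparing the two evaluations yields $p\sum_{j=1}^{p-1} q_p(j)\equiv pW_p \pmod{p^2}$, and cancelling one factor of $p$ gives Lerch's formula. (This is essentially Lerch's original argument.)

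There is no serious obstacle; the only thing to be careful about is the bookkeeping — one must genuinely work modulo $p^2$ rather than $p$, and the parity of $p-1$, which is exactly where the hypothesis that $p$ be odd is used, must be tracked so that the signs in the binomial expansion of $(-1+pW_p)^{p-1}$ come out correctly. Equivalently, one could phrase the proof via the quasi-logarithmic identity $q_p(ab)\equiv q_p(a)+q_p(b)\pmod{p}$ (immediate from $(ab)^{p-1}-1 = (1+p\,q_p(a))(1+p\,q_p(b))-1$), which collapses $\sum_{j=1}^{p-1} q_p(j)$ to $q_p\bigl((p-1)!\bigr)$, and then compute $q_p\bigl((p-1)!\bigr) = \bigl(((p-1)!)^{p-1}-1\bigr)/p \equiv W_p \pmod p$ from the same congruence $((p-1)!)^{p-1}\equiv 1+pW_p\pmod{p^2}$; this is the identical computation, merely repackaged.
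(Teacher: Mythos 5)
Your proof is correct and is essentially the same argument as the paper's: multiplying the relations $j^{p-1}=1+pq_p(j)$ over $j=1,\dotsc,p-1$ and reducing modulo $p^2$ is exactly the iterated form of Eisenstein's relation $q_p(ab)\equiv q_p(a)+q_p(b)\pmod{p}$ that the paper uses, and your evaluation of $\bigl((p-1)!\bigr)^{p-1}=(-1+pW_p)^{p-1}\equiv 1+pW_p\pmod{p^2}$ matches the paper's computation of $q_p((p-1)!)\equiv W_p\pmod{p}$. As you yourself note in your closing remark, the two presentations are the identical computation repackaged.
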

\begin{proof}
Given $a$ and $b$ with $p\nmid ab$, set $j=ab$ in \eqref{EQ: qDef}. Substituting $a^{p-1}=pq_p(a)+1$ and $b^{p-1}=pq_p(b)+1$, we deduce Eisenstein's relation \cite{Eisenstein}
\begin{align*}
	q_p(ab) &\equiv q_p(a) + q_p(b) \pmod{p},
\intertext{ which implies} 
	q_p((p-1)!) &\equiv \sum_{j=1}^{p-1} q_p(j) \pmod{p}.
\end{align*}
On the other hand, setting $j=(p-1)!=pW_p-1$ in \eqref{EQ: qDef} and expanding, the hypothesis $p-1\ge2$ leads to $q_p((p-1)!) \equiv W_p\!\pmod{p}$. This proves the lemma.
\end{proof}

\begin{thm} \label{THM: p^2}
For $n=1$, the supercongruence
\begin{equation}
	1^n + 2^n + \dotsb + k^n\equiv (k+1)^n \pmod{k^2}  \label{EQ: EMCong k^2}
\end{equation}
holds if and only if $k=1$ or $2$. For $n\ge3$ odd,~\eqref{EQ: EMCong k^2} holds if and only if $k=1$. Finally, for $n\ge2$ even,~\eqref{EQ: EMCong k^2} holds if and only if prime $p\mid k$ implies
\begin{enumerate}
	\item[(i).] $n \equiv 0\!\pmod{(p-1)}$, and  \\[-0.75em]
	\item[(ii).] $\dfrac{k}{p}+1 \equiv p\left(n(W_p+1)-1\right)\pmod{p^2}.$
\end{enumerate}
\end{thm}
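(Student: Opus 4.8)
The plan is to localize the supercongruence~\eqref{EQ: EMCong k^2} at each prime dividing $k$ and then to compute the power sum $\Sigma_n(p)$ modulo $p^2$ by means of Lerch's formula. Since \eqref{EQ: EMCong k^2} implies the congruence~\eqref{EQ: EMCong} modulo $k$, Theorem~\ref{THM: EMCongT} settles the odd exponents almost for free: if $n$ is odd and \eqref{EQ: EMCong k^2} holds, then $k=1$ or $k=2$. The case $k=1$ is vacuous, and for $k=2$ one compares $1^n+2^n=1+2^n$ with $(k+1)^n=3^n$ modulo $4$: when $n=1$ both sides equal $3$, while for odd $n\ge3$ the left side is $\equiv 1$ and the right side is $\equiv(-1)^n\equiv 3\pmod 4$. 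This proves the first two assertions.

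For $n$ even I would argue that in \emph{both} directions of the asserted equivalence one may assume that $k$ is square-free, that $(p-1)\mid n$ for every prime $p\mid k$, and that \eqref{EQ: EMCong} holds. Indeed, if \eqref{EQ: EMCong k^2} holds then so does \eqref{EQ: EMCong}, and Theorem~\ref{THM: EMCongT} gives~(i) and the square-freeness; conversely, condition~(ii) forces $k/p\equiv -1\pmod p$ for each $p\mid k$, hence $k$ square-free and condition~(ii) of Theorem~\ref{THM: EMCongT}, which together with~(i) yields \eqref{EQ: EMCong}. Since $k$ is then square-free, $k^2=\prod_{p\mid k}p^2$ with pairwise coprime factors, so by the Chinese Remainder Theorem \eqref{EQ: EMCong k^2} is equivalent to the family of congruences $\Sigma_n(k)\equiv (k+1)^n\pmod{p^2}$, $p\mid k$. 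Here $(k+1)^n\equiv 1+nk\pmod{k^2}$ by the binomial theorem, hence $\equiv 1+nk\pmod{p^2}$, and Corollary~\ref{COR: Sigma_n(k) mod p^2} gives $\Sigma_n(k)\equiv \tfrac kp\,\Sigma_n(p)\pmod{p^2}$. So the whole problem reduces to evaluating $\Sigma_n(p)$ modulo $p^2$.

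For $p=2$ this is immediate: $\Sigma_n(2)=1+2^n\equiv 1\pmod 4$ since $n\ge 2$. For odd $p$ with $(p-1)\mid n$, put $m=n/(p-1)$ (a positive integer); since $p^n\equiv 0\pmod{p^2}$ and $j^n=(j^{p-1})^m=(1+p\,q_p(j))^m\equiv 1+m p\, q_p(j)\pmod{p^2}$ for $1\le j\le p-1$ by~\eqref{EQ: qDef}, summing over $j$ and applying Lerch's relation $\sum_{j=1}^{p-1}q_p(j)\equiv W_p\pmod p$ (Lemma~\ref{LEM: Lerch}) gives
\[
	\Sigma_n(p)\equiv (p-1)+m p\, W_p \pmod{p^2}.
\]
Writing $a=k/p$ and substituting this into $\tfrac kp\,\Sigma_n(p)\equiv 1+nk\pmod{p^2}$ turns the latter into $a p(1+mW_p-n)\equiv 1+a\pmod{p^2}$. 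The right-hand side is divisible by $p$, so $a\equiv -1\pmod p$; replacing $ap$ by $-p$ modulo $p^2$ then rewrites the congruence as $1+a\equiv p(n-1-mW_p)\pmod{p^2}$. Finally $n+m=pm\equiv 0\pmod p$, so $-mW_p\equiv nW_p\pmod p$ and therefore $p(n-1-mW_p)\equiv p\bigl(n(W_p+1)-1\bigr)\pmod{p^2}$, i.e.\ exactly condition~(ii). For $p=2$ one has $W_2=1$, and a direct check shows that both $\Sigma_n(k)\equiv 1+nk\pmod 4$ and condition~(ii) amount to $k/2\equiv 1\pmod 4$, using that $n$ is even. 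Reading this chain of equivalences in both directions finishes the even case.

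The delicate part is the input from Section~\ref{SEC: Super} and the final matching: one must justify Corollary~\ref{COR: Sigma_n(k) mod p^2} (which rests on $p\mid\Sigma_{n-1}(p)$ for odd $p$), compute $\Sigma_n(p)\pmod{p^2}$ correctly, and recognize that the two algebraically different-looking expressions $p(n-1-mW_p)$ and $p\bigl(n(W_p+1)-1\bigr)$ coincide modulo $p^2$ precisely because $n=(p-1)m$. The prime $p=2$ must be carried along separately throughout, since Lerch's formula is stated only for odd primes and several of the ``$n\ge 2$'' simplifications are needed.
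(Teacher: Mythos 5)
Your proposal is correct and follows essentially the same route as the paper's proof: dispose of odd $n$ via Theorem~\ref{THM: EMCongT} plus a mod-$4$ check, then for even $n$ localize at each prime via square-freeness, invoke Corollary~\ref{COR: Sigma_n(k) mod p^2} and the expansion $(k+1)^n\equiv 1+nk$, and evaluate $\Sigma_n(p)\bmod p^2$ through $j^{p-1}=1+pq_p(j)$ and Lerch's formula. The only differences are cosmetic (you carry $m=n/(p-1)$ to the end and treat $p=2$ by direct verification, where the paper substitutes $n/(p-1)\equiv -n\pmod p$ earlier and uses $q_2(1)=0$).
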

\begin{proof}
To prove the first two statements, use Theorem \ref{THM: EMCongT} together with the fact that the congruences $1^n+2^n\equiv 1\!\pmod{4}$ and $3^n\equiv (-1)^n \equiv -1\!\pmod{4}$ all hold when $n\ge3$ is odd.

Now assume $n\ge2$ is even. Let $p$ denote a prime. By Theorem \ref{THM: EMCongT}, we may assume that (i) holds if $p\mid k$, and that $k$ is square-free. It follows that the supercongruence \eqref{EQ: EMCong k^2} is equivalent to the system
\begin{align*}
	\Sigma_n(k) &\equiv (k+1)^n \pmod{p^2}, \qquad p\mid k.
\intertext{Corollary~\ref{COR: Sigma_n(k) mod p^2} and expansion of $(k+1)^n$ allow us to write the system as}
	\frac{k}{p}\, \Sigma_n(p) &\equiv 1+nk \pmod{p^2}, \qquad p\mid k.
\intertext{\indent Since $n$ is at least $2$ and $(p-1)\mid n$, we have}
	\Sigma_n(p) &\equiv \Sigma_n(p-1) \pmod{p^2}  \\
		&= \sum_{j=1}^{p-1} (j^{p-1})^{n/(p-1)}.
\end{align*}
Substituting $j^{p-1} = 1+pq_p(j)$ and expanding, the result is
\begin{align}
	\Sigma_n(p)
		\equiv \sum_{j=1}^{p-1} \biggl(1+\frac{n}{p-1}pq_p(j)\biggr)
		&\equiv p-1-np\sum_{j=1}^{p-1} q_p(j)  \pmod{p^2},
\end{align}
since $n/(p-1) \equiv -n\!\pmod{p}$. Now Lerch's formula (if $p$ is odd), together with the equality $q_2(1)=0$ and the evenness of $n$ (if $p=2$), yield
\begin{align}
	\Sigma_n(p) &\equiv p-1-npW_p \pmod{p^2}.  \notag
\intertext{\indent Summarizing, the supercongruence \eqref{EQ: EMCong k^2} is equivalent to the system}
	\frac kp\, (p-1-npW_p) &\equiv 1+nk \pmod{p^2}, \qquad p\mid k.  \notag
\intertext{It in turn can be written as}
	\frac kp+1 &\equiv -k\bigl(n(W_p+1)-1\bigr) \pmod{p^2}, \qquad p\mid k. \label{EQ: k==-p}
\end{align}
On the right-hand side, we substitute $k \equiv -p\!\pmod{p^2}$ (deduced from \eqref{EQ: k==-p} multiplied by $p$), and arrive at~(ii). This completes the proof.
\end{proof}

\newpage
\begin{cor}  \label{COR: p^2 applic}
Let $n\ge 2$ be even and let $K$ be a primary pseudoperfect number with $r\le8$ prime factors.
\begin{enumerate}
 	\item[(i).] Then $(n,K)$ is a solution of the supercongruence~\eqref{EQ: EMCong k^2} if and only if either $K=2$, or $K=42$ and $n \equiv 12\!\pmod{42}$.
	\item[(ii).] The supercongruence
		\begin{equation}
 			1^n + 2^n + \dotsb + K^n\equiv (K+1)^n \pmod{K^3}  \label{EQ: EMCong k^3}
		\end{equation}
	holds if and only if $K=2$ and $n\ge 4$.
	\item[(iii).] The \EME{} has no solution $(n,k)$ with $k=K$.
\end{enumerate}
\end{cor}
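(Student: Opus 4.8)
The plan is to take the three parts in order, using Theorem~\ref{THM: p^2} as the engine for~(i), reducing~(ii) to a congruence modulo the cube of the smallest prime, and deducing~(iii) from~(ii) together with Corollary~\ref{COR: EMC n odd}.

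For part~(i), I would invoke Theorem~\ref{THM: p^2}: since $n\ge2$ is even, $(n,K)$ satisfies the supercongruence~\eqref{EQ: EMCong k^2} if and only if, for every prime $p\mid K$, both $(p-1)\mid n$ and $\tfrac Kp+1\equiv p\bigl(n(W_p+1)-1\bigr)\pmod{p^2}$. Since $K$ is primary pseudoperfect, Definition~\ref{DFN: ppp}, Corollary~\ref{COR: which k and n}, and Theorem~\ref{THM: EMCongT} condition~(ii) give $p\bigm|\tfrac Kp+1$ for each $p\mid K$, so, writing $\tfrac Kp+1=p\,m_p$, the second condition becomes the mod-$p$ congruence $m_p\equiv n(W_p+1)-1\pmod p$. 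The two cheap primes drive everything. For $p=2$ one has $W_2=1$, so the condition says $m_2$ is odd, i.e.\ $K\equiv2\pmod8$, and a glance at Table~\ref{TABLE: PPPs} shows this holds only for $K\in\{2,42,47058,52495396602\}$; the other four entries are eliminated at $p=2$. For $p=3$ one has $W_3=1$, and since $3\mid\LCM\{p-1:p\mid K\}$ for $K=47058$ and for $K=52495396602$, the requirement $(p-1)\mid n$ forces $3\mid n$, so the condition becomes $\tfrac K3+1\equiv6\pmod9$; the residue checks $\tfrac{47058}{3}+1\equiv0$ and $\tfrac{52495396602}{3}+1\equiv3\pmod9$ then eliminate those two. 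What is left is $K=2$, where the lone condition at $p=2$ holds for every even $n$, and $K=42$: there $\LCM\{1,2,6\}=6$, the conditions at $p=2$ and $p=3$ hold once $6\mid n$, and at $p=7$ one computes $W_7=103$, so $W_7+1\equiv-1\pmod7$ and the condition reads $n\equiv5\pmod7$; the Chinese Remainder Theorem together with $6\mid n$ yields $n\equiv12\pmod{42}$, matching the claim.

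For part~(ii), observe that~\eqref{EQ: EMCong k^3} implies~\eqref{EQ: EMCong k^2}, so by~(i) it suffices to test $K=2$ (with any even $n\ge2$) and $K=42$ (with $n\equiv12\pmod{42}$) modulo $8=2^3$. For even $n\ge4$ one has $m^n\equiv1\pmod8$ when $m$ is odd and $m^n\equiv0\pmod8$ when $m$ is even, so $\Sigma_n(K)$ is congruent modulo~$8$ to the number of odd integers in $\{1,\dots,K\}$, while $(K+1)^n\equiv1\pmod8$. For $K=42$ this gives $\Sigma_n(42)\equiv21\equiv5\not\equiv1\pmod8$, ruling out $K=42$; for $K=2$ it gives $\Sigma_n(2)\equiv1\equiv3^n\pmod8$ precisely when $n\ge4$ (the case $n=2$ failing, since $1+2^2\equiv5\not\equiv1\equiv3^2\pmod8$). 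Hence~\eqref{EQ: EMCong k^3} holds if and only if $K=2$ and $n\ge4$.

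For part~(iii), a solution of the \EME{} with $k=K$ would, being an exact equality, satisfy~\eqref{EQ: EMCong k^3}; under the standing hypothesis that $n\ge2$ is even, part~(ii) then forces $K=2$ and $n\ge4$, hence $1+2^n=3^n$, which is impossible for $n\ge2$ because $3^n-2^n>1$. (For odd~$n$, Corollary~\ref{COR: EMC n odd} leaves only the trivial $1+2=3$.) The main obstacle is the case-checking inside part~(i): for each of the eight primary pseudoperfect numbers in Table~\ref{TABLE: PPPs} one must actually carry out the residue computations $K\bmod8$ and $(\tfrac K3+1)\bmod9$\emdash and, for $K=42$, the Wilson quotient $W_7$\emdash and verify that in every non-solution case the obstruction already shows up at $p=2$ or at $p=3$.
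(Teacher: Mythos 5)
Your proposal is correct and follows essentially the same route as the paper: Theorem~\ref{THM: p^2} plus residue checks on the eight entries of Table~\ref{TABLE: PPPs} for~(i), the mod-$2^3$ parity count of $\Sigma_n(K)$ for~(ii), and the reduction of~(iii) to~(ii). Your repackaging of the $p=2$ test as ``$K\equiv2\pmod 8$'' and your direct use of $(K+1)^n\equiv1\pmod 8$ (in place of the paper's binomial expansion of $(K+1)^n$ modulo $K^3$) are minor streamlinings of the same computations.
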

\begin{proof}
(i). We use Table $1$.

($r=1$). Theorem~\ref{THM: p^2} with $k=p=2$ implies $(n,2)$ is a solution to~\eqref{EQ: EMCong k^2}. (This can also be seen directly from \eqref{EQ: EMCong k^2}: both sides are congruent to $1$ modulo~$4$.)

($r=2$). Suppose $k=2\cdot3$ is a solution to~\eqref{EQ: EMCong k^2}. Since $2\mid n$, condition~(ii) in Theorem~\ref{THM: p^2} with $p=2$ gives $3+1=\dfrac kp+1 \equiv -2\!\pmod{4}$, a contradiction. Therefore, there is no solution with $k=6$.

($r=3$). For $k=2\cdot3\cdot7$, condition (i) in Theorem~\ref{THM: p^2} requires $6\mid n$. Then~(ii) is satisfied for $p=2$ and $3$. For $p=7$, we need $6+1\equiv 7\left((103+1)n-1\right)\!\pmod{49}$, which reduces to $3n\equiv 1\!\pmod{7}$. Since also $6\mid n$, only $n\equiv 12\!\pmod{42}$ gives a solution with $k=42$.

($r=4$). If $k=2\cdot3\cdot7\cdot43$, condition (ii) with $p=2$ rules out any solution.

($r=5$). For $k=2\cdot3\cdot11\cdot23\cdot31$, condition (i) gives $3\mid n$. As $\dfrac k3+1\equiv 0\not\equiv -3\!\pmod{9}$, by~(ii) there is no solution.

($r=6,7,8$). For the numbers $K$ in Table 1 with $r=6,7,8$ prime factors, conditions~(i) and~(ii) require $\dfrac kp+1 \equiv -p\!\pmod{p^2}$, for $p=2,3,2$, respectively. But the requirement is violated in each case, and so no solution exists.

(ii). Part (i) implies that the only \emph{possible} solutions $(n,K)$ of \eqref{EQ: EMCong k^3} are $K=2$, and $K=42$ with $n \equiv 12\!\pmod{42}$.

It is easy to check that $(n,K)=(2,2)$ is not a solution. To see that $(n,2)$ is a solution when $n\ge 4$ is even, we need to show that $1+2^n \equiv 3^n\!\pmod{2^3}$. Since $1+2^n = 1+4^{n/2} \equiv 1\!\pmod{8}$ and $3^n = 9^{\,n/2} = (1+8)^{n/2} \equiv 1\!\pmod{8}$, the case $K=2$ is proved.

Now suppose \eqref{EQ: EMCong k^3} holds with $K=42$ and $n \equiv 12\!\pmod{42}$. Since
\begin{align}
	(K+1)^n &\equiv 1+nK+\frac{1}{2}n(n-1)K^2 \pmod{K^3}, \notag
\intertext{by setting $n=6n_1$ we infer that}
	\Sigma_{n}(42) &\equiv 1 - 5040n_1 + 31752n_1^2 \equiv 1 \pmod{2^3}. \label{EQ: K=42}
\end{align}
But as $n\ge 4$ is even, each of the $42$ terms in the sum $\Sigma_n(42)$ is congruent to $1$ or $0$ modulo~$8$ according as the term is odd or even, and so $\Sigma_n(42) \equiv 21\!\pmod{8}$. This contradicts \eqref{EQ: K=42}, proving~(ii).

(iii). This follows from (ii) and the fact that if $k=2$ in the \EME{}, then evidently $n=1$.
\end{proof}

\newpage
\begin{example} \label{EX: r=3} The simplest cases of (i) are $1^2 + 2^2 \equiv 3^2\!\pmod{2^2}$ and
\begin{equation*}
	1^{12} + 2^{12} + \dotsb + 42^{12} \equiv 43^{12} \pmod{42^2}.
\end{equation*}
An example of (ii) is $1^4 + 2^4 \equiv 3^4\!\pmod{2^3}$. (More generally, one can show that
\begin{equation*}
	1^n + 2^n \equiv 3^n \pmod{2^d},\quad \text{if } 2^{d-1}\mid n,
\end{equation*}
for any positive integers $n$ and $d$.)
\end{example}

In light of Theorem~\ref{THM: EMCongT} and Corollary~\ref{COR: Sigma_n(k) mod p^2}, it is natural to ask whether Theorem~\ref{THM: p^2} has an analogous corollary about supercongruences modulo $p^3$.

\begin{cnj}  \label{CNJ: p^3}
If $1^n + 2^n + \dotsb + k^n\equiv (k+1)^n\!\pmod{k^2}$ and prime $p \mid k$, then
\begin{equation*}
	\Sigma_n(k)\equiv \frac{k}{p}\,\Sigma_n(p) \pmod{p^3}.
\end{equation*}
\end{cnj}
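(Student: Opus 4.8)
The plan is to imitate the proof of Corollary~\ref{COR: Sigma_n(k) mod p^2}, but to carry the binomial expansion in \eqref{EQ: factor out k/p} one order further, to modulus $p^3$. Fix a prime $p\mid k$ and write $a:=k/p$. Since the hypothesis is the supercongruence modulo $k^2$, Theorem~\ref{THM: p^2} applies: I may assume $k$ is square-free and $(p-1)\mid n$, and, discarding the trivial odd cases (where that theorem forces $k=1$ or $2$), that $n\ge2$ is even. Crucially, condition~(ii) of Theorem~\ref{THM: p^2} pins down $a$ modulo $p^2$, namely $a\equiv -1 + p\bigl(n(W_p+1)-1\bigr)\pmod{p^2}$. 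Expanding $((h-1)p+j)^n$ through the $p^2$ term, summing over $j=1,\dots,p$ and then over $h=1,\dots,a$, I obtain
\begin{equation*}
	\Sigma_n(k) \equiv a\,\Sigma_n(p) + \tfrac{1}{2}a(a-1)\,np\,\Sigma_{n-1}(p) + \tfrac{1}{6}(a-1)a(2a-1)\,\binom{n}{2}\,p^2\,\Sigma_{n-2}(p) \pmod{p^3}.
\end{equation*}
Thus the conjecture reduces to showing that the two correction terms, call them $T_1$ and $T_2$, each vanish modulo $p^3$.

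The central computation is a refinement of Lemma~\ref{LEM: H&W Thm119} to modulus $p^2$. Pairing $j$ with $p-j$ and using that $n-1$ is odd gives, for $n\ge3$,
\begin{equation*}
	\Sigma_{n-1}(p) \equiv \Sigma_{n-1}(p-1) \equiv (n-1)\,p\sum_{j=1}^{(p-1)/2} j^{\,n-2} \pmod{p^2}.
\end{equation*}
Since $n-2$ is even, a second pairing yields $2\sum_{j=1}^{(p-1)/2} j^{\,n-2}\equiv \Sigma_{n-2}(p-1)\pmod{p}$, which by Lemma~\ref{LEM: H&W Thm119} is $\equiv0\pmod p$ exactly when $(p-1)\nmid(n-2)$. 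As $(p-1)\mid n$, the divisibility $(p-1)\mid(n-2)$ forces $p\in\{2,3\}$. Hence for every prime $p\ge5$ dividing $k$ I get both $\Sigma_{n-1}(p)\equiv0\pmod{p^2}$ and $\Sigma_{n-2}(p)\equiv0\pmod{p}$, so that $T_1\equiv T_2\equiv0\pmod{p^3}$ \emph{automatically}, with no appeal to condition~(ii). (Note $p\ge5$ also forces $n\ge p-1\ge4$, so the expansion above is valid.)

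The remaining work, and the only place where the full strength of the modulo-$k^2$ hypothesis is required, is the pair of small primes $p=2$ and $p=3$; here I would feed the precise residue of $a$ from condition~(ii) into $T_1$ and $T_2$. For $p=2$, condition~(ii) gives $a\equiv1\pmod4$, so $a-1\equiv0\pmod4$; then $T_1=a(a-1)\,n\,\Sigma_{n-1}(2)\equiv0\pmod8$ because $n$ is even, while $\sum_{i=0}^{a-1} i^2\equiv \tfrac12 a(a-1)\equiv0\pmod2$ kills $T_2$. For $p=3$, condition~(ii) gives $a\equiv 6n-4\pmod9$ (so $a\equiv-1\pmod3$); after dividing $T_1+T_2$ by the common factor $9$ the claim reduces to $n(n-1)\bigl(1+\sum_{i=0}^{a-1} i^2\bigr)\equiv0\pmod3$. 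Splitting on $n\bmod 3$, the factor $n(n-1)$ settles $n\equiv0,1$, while for $n\equiv2\pmod3$ condition~(ii) forces $a\equiv8\pmod9$, whence $\sum_{i=0}^{a-1} i^2\equiv2\pmod3$ supplies the missing factor of $3$.

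I expect the bookkeeping in these two small-prime cases—in particular, checking that condition~(ii) delivers exactly the residue of $a$ modulo $p^2$ that is needed to annihilate $T_1$ and $T_2$—to be the main obstacle, just as the prime-by-prime analysis was the crux of Corollary~\ref{COR: p^2 applic}. The reduction above strongly suggests, however, that the conjecture is a theorem: the congruence is automatic at all primes $p\ge5$, and the arithmetic of $W_2=W_3=1$ conspires to close the cases $p=2,3$.
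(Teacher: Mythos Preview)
The paper does \emph{not} prove this statement\emdash it is listed as Conjecture~\ref{CNJ: p^3}, followed only by a numerical example and some heuristic remarks. So there is no ``paper's own proof'' to compare against; your proposal is an attempt to \emph{settle} an open conjecture.

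Your reduction to the vanishing of $T_1+T_2$ modulo $p^3$ via the second-order binomial expansion is correct, and the argument for $p\ge5$ is clean and complete: the pairing $j\leftrightarrow p-j$ does give $p^2\mid\Sigma_{n-1}(p)$ and $p\mid\Sigma_{n-2}(p)$ once $(p-1)\mid n$ and $p\ge5$, so the conjecture holds at those primes without invoking condition~(ii) at all. The $p=2$ case also checks out as you wrote it.

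The gap is at $p=3$. Your claimed reduction ``$n(n-1)\bigl(1+\sum_{i=0}^{a-1}i^2\bigr)\equiv0\pmod3$'' is not what $(T_1+T_2)/9\pmod3$ actually simplifies to: the first term $S_1\,n\,(\Sigma_{n-1}(3)/3)$ does not in general contribute a factor $n-1$, and the second term does not produce the ``$1+$''. What does work is a straightforward three-way split on $n\bmod6$. Using $a\equiv 6n-4\pmod9$ from condition~(ii), one finds $(a\bmod9,\ \Sigma_{n-1}(3)/3\bmod3,\ \Sigma_{n-2}(3)\bmod3)$ equals $(5,2,2)$, $(8,1,2)$, $(2,0,2)$ for $n\equiv0,2,4\pmod6$ respectively (valid for $n\ge4$), and in each case $(T_1+T_2)/9\equiv0\pmod3$ after computing $S_1,S_2,\binom{n}{2}\bmod3$. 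The boundary case $n=2$ with $3\mid k$ must be excluded separately: there condition~(i) forces every prime factor of $k$ to lie in $\{2,3\}$, and one checks directly that neither $k=3$ nor $k=6$ satisfies the mod-$k^2$ supercongruence, so the hypothesis is vacuous.

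With the $p=3$ bookkeeping repaired along these lines, your outline appears to upgrade Conjecture~\ref{CNJ: p^3} to a theorem\emdash which would go strictly beyond what the paper establishes.
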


\begin{example}
For $p=2,3,7$, one can compute that
\begin{equation*}
	1^{12} + 2^{12} + \dotsb + 42^{12}
		\equiv \frac{42}{p}\,\left(1^{12} + 2^{12} + \dotsb + p^{12} \right)\!\pmod{p^3}.
\end{equation*}
\end{example}

In fact, for $p=2,3,7$ it appears that $\Sigma_n(42)\equiv \dfrac{42}{p}\,\Sigma_n(p)\!\pmod{p^3}$ holds true not only when $n\equiv 12\!\pmod{42}$, but indeed for all $n\equiv 0\!\pmod{6}$. One reason may be that, for $p=7$ (but not for $p=2$ or $3$), apparently $6\mid n$ implies $p^2\mid \Sigma_{n-1}(p)$. (Compare $p\mid \Sigma_{n-1}(p)$ in the proof of Corollary~\ref{COR: Sigma_n(k) mod p^2}.)

\section*{Acknowledgments}
We are very grateful to Wadim Zudilin for contributing Theorem~\ref{THM: p^2} and some of the other results in Section~\ref{SEC: Super}. The first author thanks both the Max Planck Institute for Mathematics for its hospitality during his visit in October 2008 when part of this work was done, and Pieter Moree for reprints and discussions of his articles on the \EME{}. The second author thanks Angus MacMillan and Dr.\ Stanley K.\ Johannesen for supplying copies of hard-to-locate papers, and Drs.\ Jurij and Daria Darewych for underwriting part of the research.


\end{document}